\newtheorem{thm}{Theorem}
\begin{document}

\begin{center}
\Large\textbf{Optimal Designs for Prediction in Two Treatment Groups Random Coefficient Regression Models
} \\[11pt]
\normalsize
Maryna Prus\footnote{Maryna Prus: \href{mailto:maryna.prus@ovgu.de}{maryna.prus@ovgu.de}}\\[11pt]

\footnotesize
Otto-von-Guericke University Magdeburg, Institute for Mathematical Stochastics, 
\\
PF 4120, D-39016 Magdeburg, Germany\\
\normalsize
\end{center}

\begin{quote}
\textbf{Abstract:} The subject of this work is two treatment groups random coefficient regression models, in which observational units receive some group-specific treatments. We provide A- and D-optimal designs (optimal group sizes) for the estimation of fixed effects and the prediction of random effects. We illustrate the obtained results by a numerical example.

\textbf{Keywords:}  Mixed models, estimation and prediction, optimal design, cluster randomization

\end{quote}

\section{Introduction}

The subject of this paper is optimal designs in two treatment groups random coefficient regression (RCR) models, in which observational units receive some group-specific kinds of treatment. These models are typically used for cluster randomized trials. For some real data examples see e.\,g. \cite{pie}. 

Optimal designs for fixed effects models with multiple groups are well discussed in the literature (see e.\,g. \cite{bai}, ch.~3). In models with random coefficients, the estimation of population parameters (fixed effects) is usually of prior interest (see e.\,g \cite{fedo}, \cite{kun}, \cite{can1}). Optimal designs for the prediction of random effects in models with known population parameters have been considered in detail in \cite{gla}. \cite{pru1} provide analytical results for the models with unknown population mean under the assumption of the same design for all individuals.
Multiple group models with fixed group sizes were briefly discussed in \cite{pru3}, ch.~6. 

Here, we consider two groups models with unknown population parameters and group specific designs. We provide A- and D-optimality criteria for the estimation and the prediction of fixed and random effects, respectively. Our main focus is optimal designs for the prediction.

The paper is structured in the following way: In Section~2 the two groups RCR model will be introduced. Section~3 presents the best linear unbiased estimator for the population parameter and the best linear unbiased predictor for individual random effects. Section~4 provides analytical results for the designs, which are optimal for the estimation or for the prediction. The paper will be concluded by a short discussion in Section~5.

\section{Two Treatment Groups RCR Model}\label{k2}

In this work we consider RCR models with two treatment groups $G_1$ and $G_2$, where observational units (people, plots, studies, etc.) receive group-specific kinds of treatment, $T_1$ and $T_2$, respectively. Further we will use the term "individuals" instead of "observational units" for simplicity. The first group includes $n_1$ individuals and the second group $n_2$ individuals. The groups sizes $n_1$ and $n_2$ are to be optimized and the total number of individuals $N=n_1+n_2$ in the experiment is fixed. The $k$-th observation at the $i$-th individual is described for the first group by
\begin{equation}\label{mod1}
{Y}_{1ik}= \mu_{\textit{1}i} + \varepsilon_{1ik},\quad i=1, \dots, n_1,\quad  k=1, \dots, K
\end{equation}
and for the second group by
\begin{equation}\label{mod2}
{Y}_{2ik}=\mu_{\textit{2}i} + \varepsilon_{2ik},\quad i=n_1+1, \dots, N,\quad k=1, \dots, K,
\end{equation}
where $K$ is the number of observations per individual, which is assumed to be the same for both groups, $\varepsilon_{1ik}$ and $\varepsilon_{2ik}$ are the observational errors in the first and the second groups with zero expected value and the variances $\mbox{var}(\varepsilon_{1ik})=\sigma_1^2$ and $\mbox{var}(\varepsilon_{2ik})=\sigma_2^2$, respectively. $\mu_{1i}$ and $\mu_{2i}$ are the individual response parameters. 

As it has been already mentioned above, we optimize the group sizes $n_1$ and $n_2$. Therefore, we define the individual parameters for all individuals for both groups: $\mbox{\boldmath{$\theta $}}_i:=( \mu_{1i}, \mu_{2i})^\top$, $i=1, \dots, N$. The parameters can be interpreted as follows: Let individual $i$ be in the second group. Then the parameter $\mu_{1i}$ describes the response, which would be observed at individual $i$ if the individual had received treatment $T_1$, and $\mu_{2i}$ is the usual response parameter of the individual. The latter parametrization allows to identify the best kind of treatment for each individual (for future treatments), which can be useful in practical situations where only one treatment per individual is possible.  

The individual parameters are assumed to have an unknown mean $\mbox{E}(\mbox{\boldmath{$\theta$}}_i)=( \mu_1, \mu_2)^\top=:\mbox{\boldmath{$\theta$}}_0$ and a covariance matrix $\mbox{Cov}(\mbox{\boldmath{$\theta$}}_i)=\textrm{diag}(\sigma_1^2\,u, \sigma_2^2\,v)$ for given dispersions $u>0$ and $v>0$. All individual parameters $\mbox{\boldmath{$\theta $}}_i$ and all observational errors $\varepsilon_{1i'k}$ and $\varepsilon_{2i''k'}$, $i, i', i''=1, \dots, N$, $k, k'=1, \dots, K$, are assumed to be uncorrelated.

Note that this model is not a particular case of the RCR models considered by \cite{pru1}. In contrast to that paper, here the expected values for the response parameters $\mu_{1i}$ and $\mu_{2i}$ in the first and the second groups are not the same (which is equivalent to different regression functions in the parametrization using $\mbox{\boldmath{$\theta$}}_i$) and group sizes are non-fixed. Therefore, the approach proposed by \cite{pru1} cannot be used.

Further we focus on the following contrasts: the population parameter $\alpha_0=\mu_1-\mu_2$ and the individual random parameters $\alpha_i=\mu_{1i}-\mu_{2i}$, $i=1, \dots, N$. $\alpha_0$ describes the difference between the mean parameters $\mu_1$ and $\mu_2$ in the first and in the second group, respectively, and $\alpha_i$ may be interpreted as the difference for individual $i$ between the present response and the response, which could have been observed if the individual had received another treatment. We search for the designs (group sizes), which are optimal for the estimation of $\alpha_0$ or for the prediction of $\alpha_i$.

\section{Estimation and Prediction}\label{k3}

In this section we concentrate on the estimation of the population parameter $\alpha_0$ and the prediction of the individual parameters $\alpha_i$. We use the standard notation $\bar{Y_{\textit{1}}}=\frac{1}{n_1}\sum_{i=1}^{n_1}\,\frac{1}{K}\sum_{k=1}^{K}Y_{\textit{1}ik}$ and $\bar{Y_{\textit{2}}}=\frac{1}{n_2}\sum_{i=n_1+1}^{N}\,\frac{1}{K}\sum_{k=1}^{K}Y_{\textit{2}ik}$ for the mean response in the first and the second treatment group, respectively, and obtain the following best linear unbiased estimator (BLUE) for $\alpha_0$.

\begin{thm}\label{t1} 
\begin{enumerate}
\item[a)] The BLUE for the population parameter $\alpha_0$ is given by 
\begin{equation}\label{a0}
\hat{\alpha}_0=\bar{Y_{\textit{1}}} - \bar{Y_{\textit{2}}}.
\end{equation}
\item[b)] The variance of the BLUE $\hat{\alpha_0}$ is given by 
\begin{equation}\label{var}
\mathrm{var}\left(\hat{\alpha}_0\right) = \frac{\sigma_1^2(Ku+1)}{Kn_1} + \frac{\sigma_2^2(Kv+1)}{Kn_2}.
\end{equation}
\end{enumerate}
\end{thm}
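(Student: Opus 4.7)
My plan is to exploit the fact that all random quantities in group~1 are uncorrelated with all random quantities in group~2, so that the two-group problem decouples into two independent one-way random-intercept problems, and then to argue for each group that the simple group mean is the BLUE of the corresponding population parameter $\mu_j$.

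More concretely, for part~(a) I would first rewrite each model as $Y_{jik}=\mu_j+b_{ji}+\varepsilon_{jik}$, where $b_{ji}:=\mu_{ji}-\mu_j$ is a zero-mean random effect with $\mathrm{var}(b_{1i})=\sigma_1^2 u$ and $\mathrm{var}(b_{2i})=\sigma_2^2 v$. Stacking the observations in group~$j$ into a vector $\mathbf{Y}_j$, one obtains $E(\mathbf{Y}_j)=\mu_j\mathbf{1}_{n_jK}$ and a block-diagonal covariance matrix with $n_j$ identical blocks of the form $\sigma_j^2(a\mathbf{1}_K\mathbf{1}_K^\top+I_K)$, with $a=u$ or $a=v$. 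Because this covariance is permutation-invariant in both the individual index and the within-individual index, the generalized-least-squares weights are constant, and hence the BLUE of $\mu_j$ is the unweighted grand mean $\bar Y_j$. Since group~1 and group~2 share no random terms, the BLUE of the linear contrast $\alpha_0=\mu_1-\mu_2$ is simply $\bar Y_1-\bar Y_2$.

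For part~(b) I would use the same independence structure: $\mathrm{var}(\hat\alpha_0)=\mathrm{var}(\bar Y_1)+\mathrm{var}(\bar Y_2)$. Within group~1, independence across individuals gives $\mathrm{var}(\bar Y_1)=\frac{1}{n_1^2K^2}\sum_{i=1}^{n_1}\mathrm{var}(\sum_k Y_{1ik})$, and writing $\sum_k Y_{1ik}=Kb_{1i}+\sum_k\varepsilon_{1ik}$ (up to an additive constant) yields $\mathrm{var}(\sum_k Y_{1ik})=K^2\sigma_1^2u+K\sigma_1^2=K\sigma_1^2(Ku+1)$. Dividing gives the first summand $\sigma_1^2(Ku+1)/(Kn_1)$, and an identical calculation in group~2 supplies the second summand.

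The only non-routine step is the BLUE claim in the first paragraph; the variance computation is a direct consequence once (a) is established. I expect the cleanest justification to be the symmetry/invariance argument sketched above (or, equivalently, a short Gauss--Markov computation using the Sherman--Morrison form of $(a\mathbf{1}_K\mathbf{1}_K^\top+I_K)^{-1}$, which shows that the optimal weights on the $K$ observations of a single individual are equal, and that the optimal weights across individuals are also equal since all individuals have the same information matrix contribution). No further result from the excerpt is needed beyond the moment assumptions on $\mbox{\boldmath{$\theta$}}_i$ and $\varepsilon_{jik}$.
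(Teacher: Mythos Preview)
Your proof is correct but proceeds along a different route than the paper's. The paper embeds the two-group model into the general linear mixed model $\mathbf{Y}=\mathbf{X}\boldsymbol{\beta}+\mathbf{Z}\boldsymbol{\gamma}+\boldsymbol{\varepsilon}$ with explicit design matrices $\mathbf{X}$, $\mathbf{Z}$ and covariance matrices $\mathbf{G}$, $\mathbf{R}$, and then reads off $\hat\mu_1=\bar Y_1$ and $\hat\mu_2=\bar Y_2$ from Henderson's mixed model equations; the variance in part~(b) is then computed directly. Your argument instead exploits the block-diagonal covariance between the two groups to decouple the problem into two balanced one-way random-intercept models, and uses permutation invariance (equivalently, the compound-symmetric form of the per-individual covariance) to identify the GLS estimator of each $\mu_j$ with the simple group mean. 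Your approach is lighter and entirely self-contained for Theorem~1, needing nothing beyond Gauss--Markov; the paper's approach, while heavier for this statement alone, sets up the full mixed-model framework that it immediately reuses to derive the BLUPs and the MSE matrix in Theorems~2--4. One small remark: in a couple of places you write ``independence'' where the model only postulates ``uncorrelated''; this is harmless here since the BLUE depends only on second moments, but it is worth stating precisely.
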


Further we use the notation $\bar{Y}_{\textit{1}i}=\frac{1}{K}\sum_{k=1}^{K}Y_{\textit{1}ik}$  and $\bar{Y}_{2i}=\frac{1}{K}\sum_{k=1}^{K}Y_{\textit{2}ik}$ for the mean individual response for individuals in the first and in the second treatment group, respectively. We obtain the next result for the best linear unbiased predictor (BLUP) for the individual response parameter $\alpha_i$.
\begin{thm} The BLUP for the individual response parameter $\alpha_i$ is given by
\begin{equation}\label{ai}
\hat{\alpha}_{i}=\left\{\begin{array}{cc} \frac{Ku}{Ku+1}\,\bar{Y}_{\textit{1}i}+\frac{1}{Ku+1}\,\bar{Y_{\textit{1}}}-\bar{Y_{\textit{2}}}, & \text{ind.}\,\,\ \text{''}i\,\text{''}\,\,\, \text{in\, G1} \vspace*{0.15cm} \\ 
\bar{Y_{\textit{1}}}-\frac{Kv}{Kv+1}\,\bar{Y}_{\textit{2}i}-\frac{1}{Kv+1}\,\bar{Y_{\textit{2}}}, & \text{ind.}\,\,\ \text{''}i\,\text{''}\,\,\, \text{in\, G2}\,. \end{array}\right.
\end{equation}
\end{thm}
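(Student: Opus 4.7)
The strategy is to decompose the prediction problem via linearity. Writing $\mu_{ji}=\mu_j+b_{ji}$ with $b_{ji}:=\mu_{ji}-\mu_j$ puts the model into standard mixed-effects form with fixed effects $(\mu_1,\mu_2)$ and individual random effects $(b_{1i},b_{2i})$ of zero mean. Using linearity of the BLUP (a straightforward consequence of Henderson's equations), $\hat\alpha_i=(\hat\mu_1-\hat\mu_2)+(\hat b_{1i}-\hat b_{2i})=\hat\mu_{1i}-\hat\mu_{2i}$, since $\alpha_i=(\mu_1-\mu_2)+(b_{1i}-b_{2i})$. I treat the case $i\in G_1$ in detail; the case $i\in G_2$ is symmetric.

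For the fixed-effect part, observations from the two groups are mutually uncorrelated and enter two separate generalized least squares problems with solutions $\hat\mu_1=\bar Y_1$ and $\hat\mu_2=\bar Y_2$ (this is already implicit in Theorem~\ref{t1}). For the random part, the crucial simplification is that $b_{1i}$ is correlated only with the $K$ observations on individual $i$, every other covariance vanishing by the independence assumptions. Applying Henderson's formula, or equivalently computing the best linear predictor of $\mu_{1i}$ from $\bar Y_{1i}$ using $\mathrm{Var}(\mu_{1i})=\sigma_1^2\,u$ and $\mathrm{Var}(\bar Y_{1i}\mid\mu_{1i})=\sigma_1^2/K$, yields the shrinkage identity $\hat\mu_{1i}=\hat\mu_1+\frac{Ku}{Ku+1}(\bar Y_{1i}-\hat\mu_1)$; substituting $\hat\mu_1=\bar Y_1$ and rearranging gives $\frac{1}{Ku+1}\bar Y_1+\frac{Ku}{Ku+1}\bar Y_{1i}$. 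For the cross component $\mu_{2i}$, no observation has a nonzero covariance with the random effect $b_{2i}$, so Henderson's equations give $\hat b_{2i}=0$ and hence $\hat\mu_{2i}=\hat\mu_2=\bar Y_2$. Subtracting produces the first line of~(\ref{ai}).

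The only nontrivial calculation is the derivation of the shrinkage factor $Ku/(Ku+1)$; once the independence structure has collapsed the problem to the $K$ observations on a single individual plus a scalar estimate of the group mean, this reduces to a routine covariance computation, and the replacement of the unknown $\mu_1$ by its BLUE $\bar Y_1$ follows from the standard Henderson substitution principle. The $G_2$ case is then obtained by interchanging the roles of the two groups and of the dispersions $u$ and $v$, so no additional work is needed. The main conceptual pitfall, in my view, is not the calculation but the recognition that the prediction of $\mu_{2i}$ for an individual in $G_1$ really does default to the overall BLUE $\bar Y_2$ with \emph{no} individual-level shrinkage---there is no observation carrying information about the unrealized counterfactual response $\mu_{2i}$.
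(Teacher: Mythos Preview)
Your proof is correct and follows essentially the same approach as the paper: both cast the model in Henderson's mixed-model framework, obtain the BLUPs $\hat\mu_{1i}$ and $\hat\mu_{2i}$ from the mixed-model equations, and then form $\hat\alpha_i=\hat\mu_{1i}-\hat\mu_{2i}$. The only difference is presentational---the paper invokes the full block-matrix form of the mixed-model equations and states the resulting $\hat\mu_{ji}$ directly, whereas you exploit the independence structure to reduce to scalar shrinkage computations; the underlying argument is the same.
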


The next theorem presents the mean squared error (MSE) matrix for the total vector $\hat{\mbox{\boldmath{$\alpha$}}}:=\left(\hat{\alpha}_1, ..., \hat{\alpha}_N\right)^\top$ of all BLUPs $\hat{\alpha}_i$ for all individuals.
\begin{thm}\label{t4} 
The MSE matrix of the vector $\hat{\mbox{\boldmath{$\alpha$}}}$ of individual predictors is given by
\begin{equation}\label{mse}
\mathrm{Cov}\left(\hat{\mbox{\boldmath{$\alpha$}}}-\mbox{\boldmath{$\alpha$}}\right)=\left(\begin{array}{cc} \mathbf{A}_{11} & \mathbf{A}_{12} \\ 
\mathbf{A}_{12}^\top & \mathbf{A}_{22} \end{array}\right)
\end{equation}
for
\begin{equation*}
\mathbf{A}_{11}=\left( \frac{\sigma_1^2}{K(Ku+1)n_1} + \frac{\sigma_2^2(Kv+1)}{Kn_2}\right)\mathbf{1}_{n_1}\mathbf{1}_{n_1}^\top+\sigma_1^2\left(\frac{u}{Ku+1} + v \right)\mathbf{I}_{n_1},
\end{equation*}
where $\mathbf{1}_{m}$ denotes the vector of length $m$ with all entries equal to $1$, $\mathbf{I}_m$ is the $m\times m$ identity matrix and $\otimes$ denotes the Kronecker product,
\begin{equation*}
\mathbf{A}_{12}=\left( \frac{\sigma_1^2}{Kn_1} + \frac{\sigma_2^2}{Kn_2}\right)\mathbf{1}_{n_1}\mathbf{1}_{n_2}^\top 
\end{equation*}
and
\begin{equation*}
\mathbf{A}_{22}=\left(\frac{\sigma_1^2(Ku+1)}{Kn_1} + \frac{\sigma_2^2}{K(Kv+1)n_2}\right)\mathbf{1}_{n_2}\mathbf{1}_{n_2}^\top+\sigma_2^2\left(u + \frac{v}{Kv+1}\right)\mathbf{I}_{n_2}.
\end{equation*}
\end{thm}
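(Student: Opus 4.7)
The plan is to substitute the BLUP formula of Theorem~2 into $\hat{\alpha}_i - \alpha_i$, rewrite the result as a linear combination of mutually uncorrelated random quantities (the individual random effects and the individual error means), and then read off the covariance entries of $\hat{\mbox{\boldmath{$\alpha$}}} - \mbox{\boldmath{$\alpha$}}$ block by block. The $a\mathbf{1}\mathbf{1}^\top + b\mathbf{I}$ structure of the diagonal blocks will reflect the natural split of the resulting expression into a part shared by all individuals within a group and a part that is individual-specific.

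I would start by writing $\mu_{ji} = \mu_j + \eta_{ji}$ for $j=1,2$, with $\mathrm{var}(\eta_{1i}) = \sigma_1^2 u$ and $\mathrm{var}(\eta_{2i}) = \sigma_2^2 v$, and setting $\bar{\varepsilon}_{ji} := \frac{1}{K}\sum_k \varepsilon_{jik}$ together with the group averages $\bar{\eta}_j$ and $\bar{\varepsilon}_j$. Then $\bar{Y}_j = \mu_j + \bar{\eta}_j + \bar{\varepsilon}_j$, $\bar{Y}_{ji} = \mu_j + \eta_{ji} + \bar{\varepsilon}_{ji}$, and substituting these into the BLUP formula yields, for $i \in G_1$ and $c := 1/(Ku+1)$,
\begin{equation*}
\hat{\alpha}_i - \alpha_i \;=\; c(\bar{\eta}_1 + \bar{\varepsilon}_1) + (1-c)\bar{\varepsilon}_{1i} - c\,\eta_{1i} - (\bar{\eta}_2 + \bar{\varepsilon}_2) + \eta_{2i},
\end{equation*}
with an analogous formula for $i \in G_2$ obtained by swapping the roles of the two groups and using $c' := 1/(Kv+1)$.

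The three blocks now follow by direct covariance computation. For $\mathbf{A}_{12}$, with $i \in G_1$ and $j \in G_2$, the only random quantities appearing in both predictors are the group means $\bar{Y}_1$ and $\bar{Y}_2$; the individual-specific pieces involve disjoint random variables, so only the group-mean covariances contribute, and the identities $c(Ku+1) = c'(Kv+1) = 1$ reduce the result to the claimed $\mathbf{1}_{n_1}\mathbf{1}_{n_2}^\top$ block with coefficient $\sigma_1^2/(Kn_1) + \sigma_2^2/(Kn_2)$. For $\mathbf{A}_{11}$ I would treat $i \ne j$ and $i = j$ in $G_1$ separately: the off-diagonal case is again controlled by the shared group-mean terms and yields the coefficient of $\mathbf{1}_{n_1}\mathbf{1}_{n_1}^\top$; subtracting this from the diagonal variance isolates the individual-specific contribution, which collapses to the coefficient of $\mathbf{I}_{n_1}$. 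The block $\mathbf{A}_{22}$ then follows by the obvious symmetry that interchanges the roles of the two groups.

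The main obstacle is the bookkeeping in the diagonal $\mathbf{A}_{11}$ entry: because $\eta_{1i}$ already appears inside $\bar{\eta}_1$ and $\bar{\varepsilon}_{1i}$ inside $\bar{\varepsilon}_1$, the ``shared'' and ``individual'' pieces of the decomposition above are not mutually uncorrelated. The cleanest remedy is the further split $\bar{\eta}_1 = \eta_{1i}/n_1 + \bar{\eta}_{1,-i}$ and $\bar{\varepsilon}_1 = \bar{\varepsilon}_{1i}/n_1 + \bar{\varepsilon}_{1,-i}$, in which the ``$-i$'' averages are uncorrelated with $\eta_{1i}$ and $\bar{\varepsilon}_{1i}$. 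After this step all summands are mutually uncorrelated, their variance contributions add directly, and the algebra collapses to the stated form by nothing more than the identity $c(Ku+1) = 1$.
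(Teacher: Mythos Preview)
Your approach is correct but genuinely different from the paper's. The paper embeds the two-group model in the general linear mixed model $\mathbf{Y}=\mathbf{X}\boldsymbol{\beta}+\mathbf{Z}\boldsymbol{\gamma}+\boldsymbol{\varepsilon}$, invokes Henderson's formula for the joint MSE matrix of $(\hat{\boldsymbol{\beta}},\hat{\boldsymbol{\gamma}}-\boldsymbol{\gamma})$ as the inverse of the mixed-model coefficient matrix, and then obtains $\mathrm{Cov}(\hat{\boldsymbol{\theta}}-\boldsymbol{\theta})$ and finally $\mathrm{Cov}(\hat{\boldsymbol{\alpha}}-\boldsymbol{\alpha})$ by pre- and post-multiplying with the appropriate linear transformations $(\mathbf{1}_N\otimes\mathbf{I}_2)$ and $(\mathbf{I}_N\otimes\mathbf{1}_2^\top)$. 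You instead start from the explicit BLUP of Theorem~2, expand $\hat{\alpha}_i-\alpha_i$ in the elementary centred variables $\eta_{ji}$ and $\bar{\varepsilon}_{ji}$, and compute the covariance blocks directly. Your route is more elementary and entirely self-contained, needing only basic variance algebra and the identity $c(Ku+1)=1$; the paper's route is more structural, giving the whole $2N\times 2N$ MSE matrix of $\hat{\boldsymbol{\theta}}$ as a by-product and making the block pattern transparent via Kronecker products. Both routes involve comparable amounts of bookkeeping, and your remedy for the correlation between $\eta_{1i}$ and $\bar{\eta}_1$ (the leave-one-out split) is exactly what is needed to make the diagonal computation clean.
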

Proofs of Theorems~\ref{t1}-\ref{t4} are deferred to Appendix~\ref{A1}.

\section{Experimental Design}\label{k4}

We define the experimental (exact) design for the RCR model with two treatment groups $G_1$ and $G_2$ as follows:
\[ \xi:= \left( \begin{array}{cc} 
    G_1 & G_2 \\ n_1 & n_2
\end{array} \right).\]
For analytical purposes, we generalize this to the definition of an approximate design:
\[ \xi:= \left( \begin{array}{cc} 
    G_1 & G_2 \\ w & 1-w
\end{array} \right),\]
where $w=\frac{n_1}{N}$ and $1-w=\frac{n_2}{N}$ are the allocation rates for the first and the second groups, respectively, and only the condition $0 \leq w \leq 1$ has to be satisfied. Then only the optimal allocation rate $w^*$ to the first group has to be determined for finding an optimal design. 

Further we search for the allocation rates, which minimize variance \eqref{var} of the BLUE $\hat{\alpha}_0$ and MSE matrix \eqref{mse} of the BLUP $\hat{\mbox{\boldmath{$\alpha$}}}$ and concentrate on the A- (average) and D- (determinant) optimality criteria.

\subsection{Optimal designs for estimation of population parameter}

For the estimation of the population parameter $\alpha_0$ both A- and D-criteria may be considered to be equal to variance \eqref{var} of the BLUE $\hat{\alpha}_0$. We rewrite the variance of the estimator in terms of the approximate design and receive the following optimality criterion (neglecting the constant factor $(KN)^{-1}$):
\begin{equation}\label{ade}
\Phi_{\alpha_0}(w)= \frac{\sigma_1^2(Ku+1)}{w} + \frac{\sigma_2^2(Kv+1)}{1-w}.
\end{equation}
Criterion function \eqref{ade} can be minimized directly. The optimal allocation rate for the estimation of the population parameter $\alpha_0$ is is given by
\begin{equation}
w^*_{\alpha_0}=\frac{1}{1+\sqrt{\frac{\sigma_2^2(Kv+1)}{\sigma_1^2(Ku+1)}}}.
\end{equation}
Note that the optimal allocation rate $w^*_{\alpha_0}$ to the first group increases with increasing observational error variance $\sigma_1^2$ and the dispersion $u$ of random effects for the first group and decreases with variance parameters $\sigma_2^2$ and $v$ for the second group. Note also that if the observational error variance is the same for both groups ($\sigma_1^2=\sigma_2^2$), $w^*_{\alpha_0}$ 
is larger than $0.5$ for $u>v$ and smaller than $0.5$ for $u<v$.

\subsection{Optimal designs for prediction of individual response parameters}

We define the $A$-criterion for the prediction of the individual response parameters $\mbox{\boldmath{$\alpha$}}=( \alpha_1, ..., \alpha_N)^\top$ as the trace of MSE matrix \eqref{mse}:
\begin{equation}
\Phi_{A, \alpha}: =  \textrm{tr}\left( \textrm{Cov}\left(\hat{\mbox{\boldmath{$\alpha$}}}-\mbox{\boldmath{$\alpha$}}\right) \right).
\end{equation}
We extend this definition for approximate designs and receive the following result (neglecting the constant factor $K^{-1}$).
\begin{thm}
The A-criterion for the prediction of the individual response parameters $\mbox{\boldmath{$\alpha$}}=( \alpha_1, ..., \alpha_N)^\top$ is given by
\begin{eqnarray}\label{ap}
\Phi_{A, \alpha}(w)&=& c_1+\sigma_1^2\left(\frac{Ku+1}{w}+N\hspace{0.01cm}w\left(\frac{Ku}{Ku+1}+Kv\right)\right)\nonumber \\
&& +\ \sigma_2^2\left(\frac{Kv+1}{1-w}+N\hspace{0.05cm}(1-w)\left(\frac{Kv}{Kv+1}+Ku\right)\right),
\end{eqnarray}
where
\begin{equation*}
c_1=\sigma_1^2\left(\frac{1}{Ku+1}-Ku-1\right)+\sigma_2^2\left(\frac{1}{Kv+1}-Kv-1\right).
\end{equation*}
\end{thm}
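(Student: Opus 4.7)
The plan is to compute the trace of the MSE matrix in Theorem~\ref{t4} directly from its block decomposition and then rewrite the result in terms of the allocation rate $w=n_1/N$. Since the trace depends only on the diagonal blocks, the off-diagonal piece $\mathbf{A}_{12}$ plays no role, and each of $\mathbf{A}_{11}$ and $\mathbf{A}_{22}$ is a sum of a rank-one matrix $c\,\mathbf{1}_n\mathbf{1}_n^\top$ (whose trace is $c\,n$) and a scalar multiple $d\,\mathbf{I}_n$ of the identity (trace $d\,n$).

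Applying this observation to $\mathbf{A}_{11}$ (with $n=n_1$) produces three contributions:
\[
\mathrm{tr}(\mathbf{A}_{11}) = \frac{\sigma_1^2}{K(Ku+1)} + \frac{n_1\,\sigma_2^2(Kv+1)}{K\,n_2} + n_1\,\sigma_1^2\!\left(\frac{u}{Ku+1}+v\right),
\]
and the symmetric computation for $\mathbf{A}_{22}$ yields the analogous expression with the roles of the two groups interchanged. Adding these and multiplying through by $K$ (the factor dropped from the criterion) gives a raw expression involving the ratios $n_1/n_2$ and $n_2/n_1$.

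Next I would substitute $n_1=Nw$, $n_2=N(1-w)$, so that these ratios become $w/(1-w)$ and $(1-w)/w$. The algebraic identities $(1-w)/w=1/w-1$ and $w/(1-w)=1/(1-w)-1$ split the terms $\sigma_1^2(Ku+1)\,(1-w)/w$ and $\sigma_2^2(Kv+1)\,w/(1-w)$ each into a $w$-dependent piece of the desired form $\sigma_1^2(Ku+1)/w$ (respectively $\sigma_2^2(Kv+1)/(1-w)$) plus a constant contribution $-\sigma_i^2(K\cdot +1)$, which combines with the $\sigma_i^2/(Ku+1)$ and $\sigma_i^2/(Kv+1)$ terms from $\mathrm{tr}(\mathbf{A}_{ii})$ to produce precisely the constant $c_1$.

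Finally, the linear-in-$w$ contribution is cleaned up using the identity $\frac{Ku}{Ku+1}+Kv=K\bigl(\frac{u}{Ku+1}+v\bigr)$, which converts the expression $K\,n_1\,\sigma_1^2\bigl(\frac{u}{Ku+1}+v\bigr)=Nw\,\sigma_1^2\bigl(\frac{Ku}{Ku+1}+Kv\bigr)$ into exactly the shape stated in \eqref{ap}, with the symmetric identity handling the second group. I do not anticipate any serious obstacle here; the only care needed is bookkeeping of the factors of $K$, in particular making sure that the overall factor $K^{-1}$ that is dropped at the start and the factor $K$ extracted by the last identity are tracked consistently.
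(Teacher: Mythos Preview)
Your proposal is correct and follows exactly the route the paper (implicitly) takes: the paper offers no separate proof for this theorem beyond the remark that one extends the trace of the MSE matrix from Theorem~\ref{t4} to approximate designs and drops the factor $K^{-1}$, which is precisely your computation of $\mathrm{tr}(\mathbf{A}_{11})+\mathrm{tr}(\mathbf{A}_{22})$ followed by the substitution $n_1=Nw$, $n_2=N(1-w)$ and the algebraic rearrangement via $(1-w)/w=1/w-1$. Your bookkeeping of the $K$-factors and the identification of the constant $c_1$ are accurate.
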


For this criterion no explicit formulas for optimal allocation rates can be provided. For given dispersion matrix of random effects (given values of $u$ an $v$), the problem of optimal designs can be solved numerically. In this work we are however interested in the behavior of optimal designs with respect to the variance parameters. Therefore, we consider some special cases, which illustrate this behavior.

\textbf{Special case 1}: $\sigma_1^2=\sigma_2^2$ and $u=v$

If the variances  $\sigma_1^2$ and $\sigma_2^2$ of the observational errors as well as the dispersions $u$ and $v$ (and consequently the variances $\sigma_1^2\,u$ and $\sigma_2^2\,v$) of the random effects are the same for both groups, A-criterion \eqref{ap} simplifies to
\begin{equation}
\Phi_{A, \alpha}(w)= c_2+\frac{1}{w}+\frac{1}{1-w}, 
\end{equation}
where 
\begin{equation*}
c_2=\frac{NKu'(Ku'+2)+2}{(Ku'+1)^2}-2
\end{equation*}
for $u'=u=v$ (neglecting the factor $Ku'+1$ and the observational errors variance).
We obtain for this criterion the optimal allocation rate $w^*_{A,\alpha}=0.5$, which is also optimal for estimation in the fixed-effects model ($u=v=0$).

\textbf{Special case 2}: $\sigma_1^2=\sigma_2^2$

If only the variances $\sigma_1^2$ and $\sigma_2^2$ of the observational errors are the same for both groups, the A-criterion for the prediction simplifies to 
\begin{eqnarray}
\Phi_{A, \alpha}(w)&=& c_3+\frac{Ku+1}{w}+N\hspace{0.01cm}w\left(\frac{Ku}{Ku+1}+Kv\right)\nonumber\\
&& +\ \frac{Kv+1}{1-w}+N\hspace{0.05cm}(1-w)\left(\frac{Kv}{Kv+1}+Ku\right),
\end{eqnarray}
where 
\begin{equation*}
c_3=\frac{1}{Ku+1}+\frac{1}{Kv+1}-K(u+v)-2,
\end{equation*}
(neglecting the observational errors variance).
The behavior of the optimal allocation rate will be considered for this case in a numerical example later.

The D-criterion for the prediction of $\mbox{\boldmath{$\alpha$}}=( \alpha_1, ..., \alpha_N)^\top$ can be defined as the logarithm of the determinant of MSE matrix \eqref{mse}:
\begin{equation}
\Phi_{D, \alpha}: =  \textrm{log}\,\textrm{det}\left( \textrm{Cov}\left(\hat{\mbox{\boldmath{$\alpha$}}}-\mbox{\boldmath{$\alpha$}}\right) \right).
\end{equation}
For approximate designs we obtain the next result.
\begin{thm}
The D-criterion for the prediction of the individual response parameters $\mbox{\boldmath{$\alpha$}}=( \alpha_1, ..., \alpha_N)^\top$ is given by
\begin{equation}\label{d}
\Phi_{D, \alpha}(w)= b_1+w\,N\,\mathrm{log}\left(\frac{\sigma_1^2(Kv+1)}{\sigma_2^2(Ku+1)}\right) + \mathrm{log}\left(\frac{\sigma^2_1(1-w)+\sigma^2_2\,w}{w(1-w)}\right),  
\end{equation}
where
\begin{equation*}
b_1=\mathrm{log}\left(\frac{(\sigma_2^2)^{N-1}(v+u(Kv+1))^{N-2}(Ku+1)(K(u+v)+1)}{\sigma_1^2K^2N(Kv+1)^{N-1}}\right).
\end{equation*}
\end{thm}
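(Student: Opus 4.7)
The plan is to apply the standard block determinant identity
\begin{equation*}
\det\!\begin{pmatrix} \mathbf{A}_{11} & \mathbf{A}_{12} \\ \mathbf{A}_{12}^\top & \mathbf{A}_{22} \end{pmatrix} = \det(\mathbf{A}_{11})\,\det\!\big(\mathbf{A}_{22}-\mathbf{A}_{12}^\top\mathbf{A}_{11}^{-1}\mathbf{A}_{12}\big)
\end{equation*}
to the MSE matrix from Theorem~\ref{t4}, and then exploit that every block has either the compound-symmetric form $a\mathbf{1}\mathbf{1}^\top+b\mathbf{I}$ or, in the case of $\mathbf{A}_{12}$, is rank one. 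I would write $\mathbf{A}_{jj}=p_j\mathbf{1}\mathbf{1}^\top+q_j\mathbf{I}_{n_j}$ and $\mathbf{A}_{12}=r\,\mathbf{1}_{n_1}\mathbf{1}_{n_2}^\top$, with the scalars $p_j,q_j,r$ read off from Theorem~\ref{t4}; in particular $q_j=\sigma_j^2\rho/(K\tau_j+1)$ with $\tau_1=u$, $\tau_2=v$ and $\rho:=u+v+Kuv=v+u(Kv+1)=u+v(Ku+1)$.

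I would first invoke the elementary identities $\det(a\mathbf{1}\mathbf{1}^\top+b\mathbf{I}_n)=b^{\,n-1}(b+na)$ and $(a\mathbf{1}\mathbf{1}^\top+b\mathbf{I})^{-1}\mathbf{1}=(b+na)^{-1}\mathbf{1}$ to obtain $\det(\mathbf{A}_{11})=q_1^{n_1-1}(q_1+n_1p_1)$ and $\mathbf{A}_{11}^{-1}\mathbf{1}_{n_1}=(q_1+n_1p_1)^{-1}\mathbf{1}_{n_1}$. Because $\mathbf{A}_{12}$ is rank one, the correction $\mathbf{A}_{12}^\top\mathbf{A}_{11}^{-1}\mathbf{A}_{12}$ collapses to the scalar multiple $n_1r^2/(q_1+n_1p_1)$ of $\mathbf{1}_{n_2}\mathbf{1}_{n_2}^\top$. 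Hence the Schur complement stays inside the compound-symmetric family, and its determinant is again given by the $b^{\,n-1}(b+na)$ rule, now applied to $q_2\mathbf{I}$ plus a rank-one update.

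The main obstacle is the algebraic cleanup of the two resulting scalars $q_1+n_1p_1$ and $q_2+n_2p_2-n_1n_2r^2/(q_1+n_1p_1)$. The key identity, which makes everything factor, is
\begin{equation*}
(Ku+1)(Kv+1)-1=K(u+v+Kuv)=K\rho.
\end{equation*}
It lets the $\rho$-numerators appearing in the $q_j$ absorb the $(Ku+1)(Kv+1)$ factors produced by the $n_jp_j$ terms, so that both scalars factor cleanly into products of $\rho$, a single $(Ku+1)^{\pm 1}$ or $(Kv+1)^{\pm 1}$, and the linear form $n_2\sigma_1^2+n_1\sigma_2^2$.

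Finally I would substitute $n_1=wN$, $n_2=(1-w)N$, take logarithms, and sort the result into $w$-linear and $w$-free pieces. The coefficients of $wN$ coming from $(wN-1)\log\sigma_1^2$, $((1-w)N-1)\log\sigma_2^2$, $-(wN-1)\log(Ku+1)$ and $-((1-w)N-1)\log(Kv+1)$ combine into the single linear term $wN\log\!\big(\sigma_1^2(Kv+1)/\sigma_2^2(Ku+1)\big)$; the remaining $w$-dependence (arising from the factor $n_2\sigma_1^2+n_1\sigma_2^2$ and the denominators $n_1n_2$) gathers into $\log\!\big((\sigma_1^2(1-w)+\sigma_2^2w)/(w(1-w))\big)$; everything else is constant in $w$ and is collected into $b_1$.
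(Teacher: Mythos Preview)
Your proposal is correct and follows exactly the approach the paper uses: apply the Schur-complement identity $\det(\mathbf{A}_{11})\det(\mathbf{A}_{22}-\mathbf{A}_{12}^\top\mathbf{A}_{11}^{-1}\mathbf{A}_{12})$ to the MSE matrix of Theorem~\ref{t4} and then pass to the approximate design $n_1=wN$, $n_2=(1-w)N$. The paper's proof is terse and omits the compound-symmetric determinant computations and the algebraic simplification via $(Ku+1)(Kv+1)-1=K\rho$ that you spell out, but the route is the same.
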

\begin{proof}
We compute the determinant of MSE matrix \eqref{mse} using the formula for block-matrices
\begin{equation*}
\mathrm{det}\left(\mathrm{Cov}\left(\hat{\mbox{\boldmath{$\alpha$}}}-\mbox{\boldmath{$\alpha$}}\right)\right)=\mathrm{det}\left(\mathbf{A}_{11}\right)\mathrm{det}\left(\mathbf{A}_{22}-\mathbf{A}_{12}^\top\mathbf{A}_{11}^{-1}\mathbf{A}_{12}\right).
\end{equation*}
Then we rewrite the result in terms of the approximate design and receive criterion \eqref{d}.
\end{proof}
Also for this criterion no finite analytical solutions for optimal designs can be provided. We consider the same special cases as for the A-criterion.

\textbf{Special case 1}: $\sigma_1^2=\sigma_2^2$ and $u=v$

If the variances of the observational errors and the variances of the random effects are the same for the first and the second treatment groups, the D-criterion for the prediction is given by
\begin{equation}
\Phi_{D, \alpha}(w)= b_2-\textrm{log}\left(w(1-w)\right),
\end{equation}
where $b_2=b_1+\textrm{log}(\sigma^2)$ for $\sigma^2=\sigma_1^2=\sigma_2^2$. Then we obtain the optimal allocation rate $w^*_{D,\alpha}=0.5=w^*_{A,\alpha}$, which is also optimal for the fixed-effects model. 

\textbf{Special case 2}: $\sigma_1^2=\sigma_2^2$

If the variances of the observational errors are the same for both groups and the dispersions $u$ and $v$ of random effects may be different, we receive the following D-criterion for the prediction:
\begin{equation}
\Phi_{D, \alpha}(w)= b_2+w\,N\,\mathrm{log}\left(\frac{Kv+1}{Ku+1}\right) - \mathrm{log}\left(w(1-w)\right). 
\end{equation}
If we additionally assume different dispersions of random effects ($u\neq v$), we obtain the next result for the optimal designs.
\begin{thm}
If the variances of the observational errors are the same and the dispersions of the random effects are different for the first and the second treatment groups, the D-optimal allocation rate for the prediction of the individual response parameters $\mbox{\boldmath{$\alpha$}}=( \alpha_1, ..., \alpha_N)^\top$ is given by
\begin{equation}\label{wd}
w^*_{D,\alpha}=  \frac{1}{2a}\left(a+2-\sqrt{a^2+4}\right),
\end{equation}
where 
\begin{equation*}
a=N\,\mathrm{log}\left(\frac{Kv+1}{Ku+1}\right).
\end{equation*}
\end{thm}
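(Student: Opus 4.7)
The plan is to minimize the reduced form of the D-criterion from Special case 2. Since the assumption $\sigma_1^2 = \sigma_2^2$ is in force and the paper has already simplified the criterion to $\Phi_{D,\alpha}(w) = b_2 + aw - \log(w(1-w))$ with $a = N\log\!\left(\frac{Kv+1}{Ku+1}\right)$, and since $b_2$ does not depend on $w$, I would minimize $f(w) := aw - \log w - \log(1-w)$ over the open interval $(0,1)$. Note that $u \neq v$ guarantees $a \neq 0$, so the quadratic arising below is genuinely of second degree.

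First I would differentiate, obtaining $f'(w) = a - \tfrac{1}{w} + \tfrac{1}{1-w} = a + \tfrac{2w-1}{w(1-w)}$. Setting $f'(w) = 0$ and multiplying through by the positive quantity $w(1-w)$ yields the quadratic $a w^2 - (a+2) w + 1 = 0$, whose two roots are
\[
w_\pm = \frac{(a+2) \pm \sqrt{a^2+4}}{2a}.
\]
The minus sign reproduces the claimed formula \eqref{wd}, so it remains to show that $w_-$ is the unique critical point in $(0,1)$ and is the minimizer.

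For the root-selection step I would argue by sign of $a$. When $a > 0$, the identity $(a+2)^2 - (a^2+4) = 4a > 0$ shows $w_- > 0$; the inequality $w_- < 1$ is equivalent to $2 - a < \sqrt{a^2+4}$, which is immediate if $a \geq 2$ and reduces to $a>0$ after squaring otherwise. Combined with Vieta's relation $w_- w_+ = 1/a$, this forces $w_+ > 1$. When $a < 0$ a symmetric computation shows that the same closed form $\tfrac{1}{2a}(a+2-\sqrt{a^2+4})$ again lies in $(0,1)$ while the other root is negative. Finally, since $f(w) \to +\infty$ as $w \to 0^+$ and as $w \to 1^-$ and $f$ is smooth on $(0,1)$, its unique interior critical point $w_-$ is the global minimum, establishing \eqref{wd}.

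The only real obstacle is the case analysis on the sign of $a$ needed to verify that a single closed-form expression covers both $v > u$ and $v < u$; the differentiation and the quadratic solution are routine.
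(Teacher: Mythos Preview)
Your proof is correct. The paper itself does not supply a proof for this theorem; it simply states the result after displaying the simplified criterion $\Phi_{D,\alpha}(w)=b_2+aw-\log(w(1-w))$, so your direct-minimization argument (differentiate, clear denominators to obtain $aw^2-(a+2)w+1=0$, select the root lying in $(0,1)$ via a sign analysis on $a$) is exactly the routine computation that the paper leaves to the reader.
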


Note that the optimal allocation rate $w^*_{D,\alpha}$ to the first group increases with $u$ and decreases with $v$. It can be easily proved that $w^*_{D,\alpha}$ is larger than $0.5$ if $u>v$ and smaller than $0.5$ if $u<v$.

For further considerations we rewrite the optimal allocation rate \eqref{wd} as a function of the ratio $q=\frac{u}{v}$ of the variances of random effects in the first and the second groups and the variance parameter $u$:
\begin{equation*}
a=N\,\mathrm{log}\left(\frac{Ku/q+1}{Ku+1}\right).
\end{equation*}
Than it is easy to verify that  $w^*_{D,\alpha}$ increases with $u$ for $q>1$ ($u>v$) and decreases for $q<1$. 


\subsection{Numerical example}

In this section we illustrate the obtained results for the prediction of the individual response parameters by a numerical example. We consider the two groups RCR model with $N=60$ individuals, $K=5$ observations per individual and the same variance of observational errors for both treatment groups: $\sigma_1^2=\sigma_2^2$ (special case~2). We fix the ratio $q=\frac{u}{v}$ of the variances of random effects in the first and the second groups by $q=3$, $q=1$ and $q=0.3$. Figures~1 and 2 illustrate the behavior of the optimal allocation rates for the A- and D-criteria in dependence of the rescaled random effects variance in the first group $\rho={u}/{(1+u)}$, which is monotonic in $u$ and has been used instead the of random effects variance itself to cover all values of the variance by the finite interval $[0,1]$.
\begin{figure}[ht]
    \begin{minipage}[]{8.2 cm}
       \centering
       \includegraphics[width=82mm]{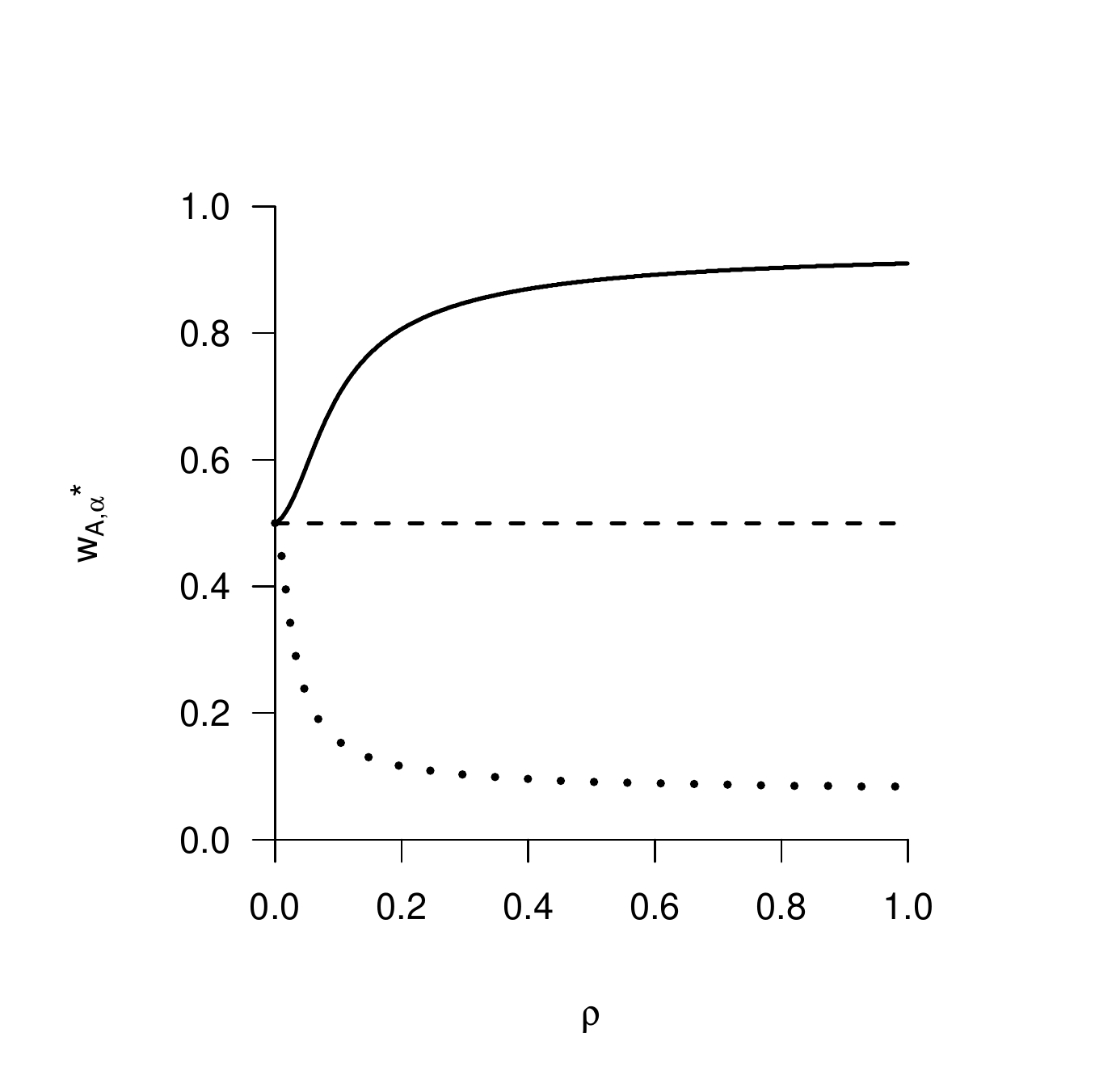}
       \label{a1}
       \end{minipage}
       \begin{minipage}[]{8.2 cm}
       \centering
       \includegraphics[width=82mm]{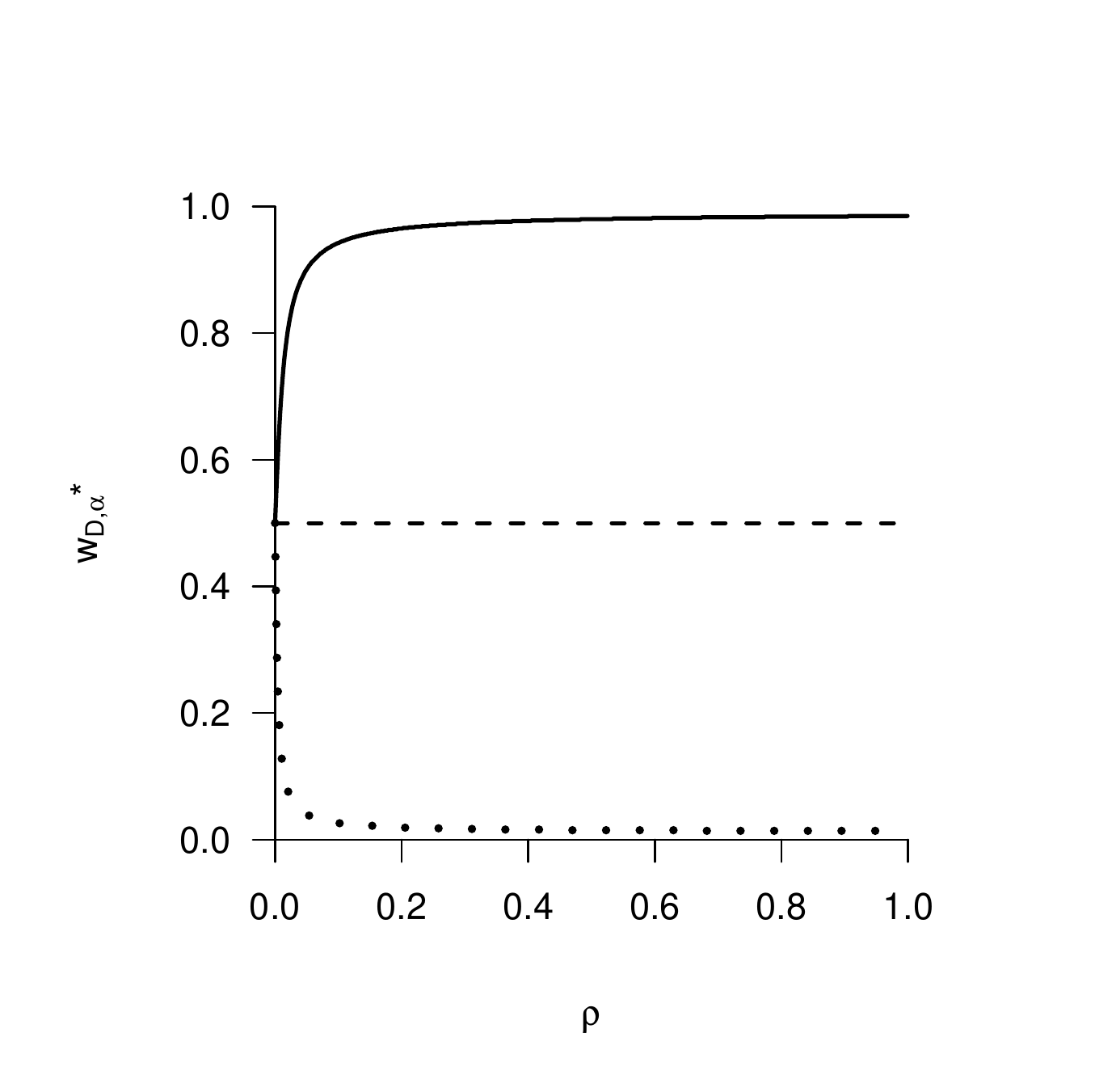}
       \label{a2}
       \end{minipage}
       \vspace{-1mm}
       \hspace*{5 mm}
       \begin{minipage}[]{6.5 cm}
       Figure 1: A-optimal allocation rate $w^*$ for variance ratios $q=3$ (solid line), $q=1$ (dashed line) and $q=0.3$ (dotted line)
       \end{minipage}
       \hspace{15 mm}
       \begin{minipage}[]{6.5 cm}
       Figure 2: D-optimal allocation rate $w^*$ for variance ratios $q=3$ (solid line), $q=1$ (dashed line) and $q=0.3$ (dotted line) 
       \end{minipage}
    \end{figure} 
		
As we can observe on the graphics, the optimal allocation rate to the first group increases with the rescaled variance $\rho$ from $0.5$ for $\rho\to 0$ to $0.910$ for the A-criterion and to $0.985$ for the D-criterion for $\rho\to\infty$ if $q=3$. If $q=0.3$, the optimal allocation rate decreases from $0.5$ to $0.083$ and $0.014$ for the A- and D-criterion, respectively. For $q=1$ the model coincides with that considered in special case~1 and the optimal design remains the same ($w^*_{A,\alpha}$=$w^*_{D,\alpha}$=0.5) for all values of $u$.

Figures~3 and 4 exhibit the efficiencies of the balanced design $w=0.5$ for the prediction in the two groups model for the A- and D-criteria. For computing the A- and D-efficiencies, we use the formulas
\begin{equation}
\hspace{1.2cm}\mathrm{eff}_A=\frac{\Phi_{A, \alpha}({{w}^*_{A,\alpha}})}{\Phi_{A, \alpha}(0.5)}
\end{equation}
and
\begin{equation}
\hspace{0.8cm}\mathrm{eff}_D=\left(\frac{\mathrm{exp}(\Phi_{D, \alpha}({{w}^*_{D,\alpha}}))}{\mathrm{exp}(\Phi_{D, \alpha}(0.5)}\right)^{\frac{1}{N}},
\end{equation}
respectively. 
\begin{figure}[ht]
    \begin{minipage}[]{8.2 cm}
       \centering
       \includegraphics[width=82mm]{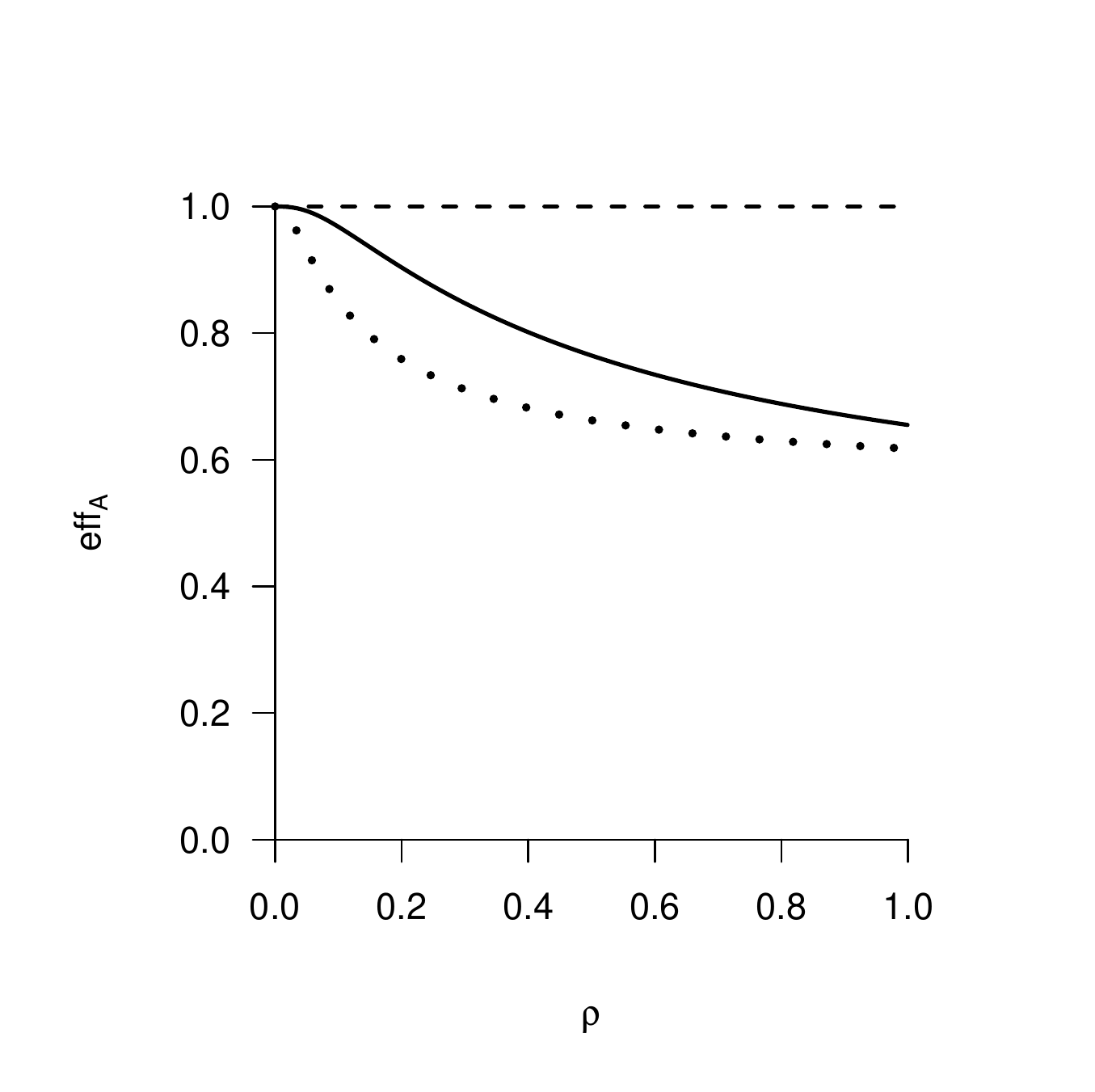}
       \label{a3}
       \end{minipage}
       \begin{minipage}[]{8.2 cm}
       \centering
       \includegraphics[width=82mm]{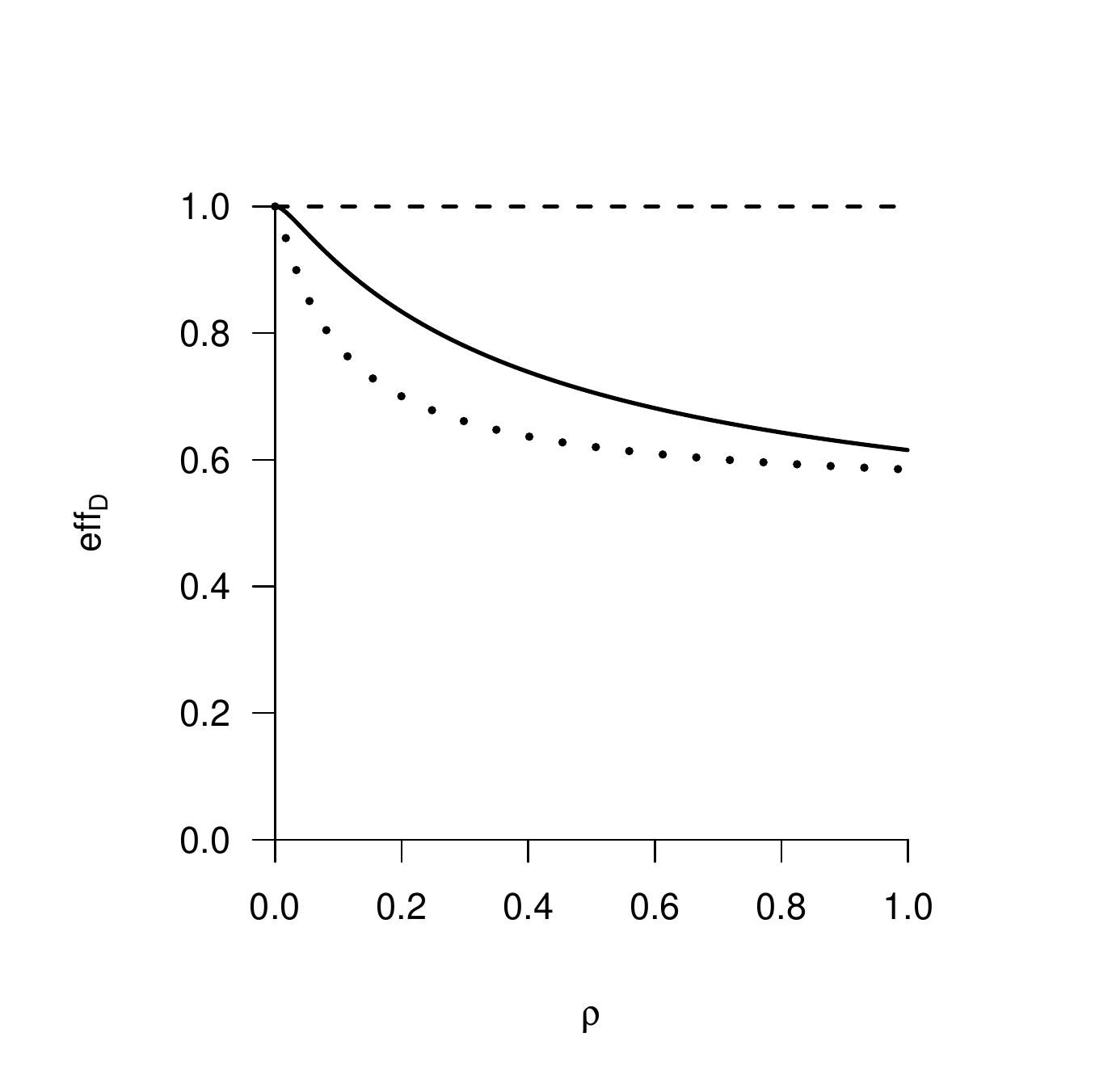}
       \label{a4}
       \end{minipage}
       \vspace{-1mm}
       \hspace*{5 mm}
       \begin{minipage}[]{6.5 cm}
       Figure 3: A-efficiency of the balanced design $w=0.5$ for variance ratios $q=3$ (solid line), $q=1$ (dashed line) and $q=0.3$ (dotted line) 
       \end{minipage}
       \hspace{15 mm}
       \begin{minipage}[]{6.5 cm}
       Figure 4: D-efficiency of the balanced design $w=0.5$ for variance ratios $q=3$ (solid line), $q=1$ (dashed line) and $q=0.3$ (dotted line)
       \end{minipage}
    \end{figure} 
		
As we can observe, the efficiency of the balanced design decreases with increasing values of $\rho$ from $1$ for $\rho\to 0$ to $0.655$ and $0.615$ if $q=3$ and to $0.618$ and $0.585$ if $q=0.3$ for the A- and D-criteria, respectively. For $q=1$ the balanced design is optimal for the prediction, which explains the efficiency equal to $1$ for all values of the variance.

\section{Discussion}
In this work we have considered RCR models with two treatment groups. We have obtained the A- and D-optimality criteria for the estimation of the population parameter and the prediction of the individual response. For a particular case of the same observational error variance for both groups, we illustrate the behavior of the optimal designs by a numerical example. The optimal allocation rate to the first treatment group turns out to be larger than $0.5$ if the variance of individual random effects in the first group is larger than in the second group. Otherwise, the optimal allocation rate is smaller than $0.5$. The efficiency of the balanced design, which assigns equal group sizes, is relatively high only for small values of the variances of random effects. The efficiency decreases fast with increasing variance.

For simplicity, we have assumed a diagonal covariance matrix of random effects. For more general covariance structure further considerations are needed. We have also assumed the same number of observations for all individuals. Optimal designs for models with different numbers of observations for different individuals may be one of the next steps in the research. Moreover, optimal designs for RCR models with more than two groups can be investigated in the future. Furthermore, some research on more robust design criteria (for example, minimax or maximin efficiency), which are not sensible with respect to variance parameters, may be an interesting extension of this work.

\appendix


\section{Proofs of Theorems~1-4}\label{A1}

The two treatment groups RCR model described by formulas \eqref{mod1} and \eqref{mod2} may be recognized as a special case of the general linear mixed model 
\begin{equation}\label{mod}
\mathbf{Y}=\mathbf{X} \mbox{\boldmath{$\beta $}} + \mathbf{Z} \mbox{\boldmath{$\gamma$}} + \mbox{\boldmath{$\varepsilon$}}
\end{equation}
with specific design matrices $\mathbf{X}$ and $\mathbf{Z}$ for fixed and random effects, respectively. $\mbox{\boldmath{$\varepsilon$}}$ are the observational errors, $\mbox{\boldmath{$\beta $}}$ denotes the fixed effects vector and $\mbox{\boldmath{$\gamma $}}$ are the random effects. The random effects and the observational errors are assumed to have zero mean and to be all uncorrelated with corresponding full rank covariance matrices $\mbox{Cov}\,(\mbox{\boldmath{$\gamma $}})=\mathbf{G}$ and $\mbox{Cov}\,(\mbox{\boldmath{$\varepsilon $}})=\mathbf{R}$. 

In model \eqref{mod} the BLUE for $\mbox{\boldmath{$\beta $}}$ and the BLUP for $\mbox{\boldmath{$\gamma $}}$ are solutions of the mixed model equations 
\begin{equation}\label{blup} 
\left( \begin{array}{c} \hat{\mbox{\boldmath{$\beta$}}} \\ \hat{\mbox{\boldmath{$\gamma$}}}
       \end{array} \right)
= {\left( \begin{array}{cc} \mathbf{X}^\top \mathbf{R}^{-1}\mathbf{X} & \mathbf{X}^\top \mathbf{R}^{-1}\mathbf{Z} \\ \mathbf{Z}^\top \mathbf{R}^{-1}\mathbf{X} & \mathbf{Z}^\top \mathbf{R}^{-1}\mathbf{Z}+\mathbf{G}^{-1}
          \end{array} \right)^{-1}}  
\left( \begin{array}{c} \mathbf{X}^\top \mathbf{R}^{-1}\mathbf{Y} \\ \mathbf{Z}^\top \mathbf{R}^{-1}\mathbf{Y} \end{array} \right) 
\end{equation}
if the fixed effects design matrix $\mathbf{X}$ has full column rank (see e.\,g. \citet{hen1} and \citet{chr}). According to \citet{hen3}, the joint MSE matrix for both $\hat{\mbox{\boldmath{$\beta $}}}$ and $\hat{\mbox{\boldmath{$\gamma $}}}$ is given by
\begin{equation}\label{cov1}
\mbox{Cov}\,\left( \begin{array}{c} \hat{\mbox{\boldmath{$\beta $}}} \\ \hat{\mbox{\boldmath{$\gamma $}}}-\mbox{\boldmath{$\gamma $}} 
                   \end{array} \right)
={\left( \begin{array}{cc} \mathbf{X}^\top \mathbf{R}^{-1}\mathbf{X} & \mathbf{X}^\top \mathbf{R}^{-1}\mathbf{Z} \\ \mathbf{Z}^\top \mathbf{R}^{-1}\mathbf{X} & \mathbf{Z}^\top \mathbf{R}^{-1}\mathbf{Z}+\mathbf{G}^{-1}
          \end{array} \right)^{-1}} .
\end{equation} 

To make use of the theoretical results available for the general linear mixed model, we rewrite the two groups RCR model in form \eqref{mod}:
\begin{equation}\label{mod3}
\mathbf{Y}=
\left(\begin{array}{c} \mathbf{1}_{Kn_1}e_1^\top \\ \mathbf{1}_{Kn_2}e_2^\top \end{array}\right)\mbox{\boldmath{$\beta $}}
+ \left(\begin{array}{cc} \mathbf{I}_{n_1}\otimes\left(\mathbf{1}_{K}e_1^\top\right) & \mathbf{0} \\ \mathbf{0} & \mathbf{I}_{n_2}\otimes\left(\mathbf{1}_{K}e_2^\top\right) \end{array}\right)\mbox{\boldmath{$\gamma $}}
+\mbox{\boldmath{$\varepsilon$}},
\end{equation}
where $\mbox{\boldmath{$\beta $}}=\mbox{\boldmath{$\theta $}}_0$, $\mbox{\boldmath{$\gamma $}}=\mbox{\boldmath{$\theta $}}-\left(\mathbf{1}_N\otimes \mathbf{I}_2 \right)\mbox{\boldmath{$\beta $}}$, $\mbox{\boldmath{$\theta $}}=(\theta_1, \dots, \theta_N)$ and  $e_m$ denotes the $m$-th unit vector. The covariance matrices of the random effects and the observational errors in model \eqref{mod3}  are given by $\mathbf{G}=\mathbf{I}_N\otimes\textrm{diag}(\sigma_1^2\,u, \sigma_2^2\,v)$ and $\mathbf{R}=\textrm{block-diag}(\sigma_1^2\,\mathbf{I}_{Kn_1}, \sigma_2^2\,\mathbf{I}_{Kn_2})$, respectively.

Using formula \eqref{blup} we obtain the BLUEs $\hat{\mu}_1= \bar{Y_{\textit{1}}}$ and $\hat{\mu}_2=\bar{Y_{\textit{2}}}$ for the fixed effects and the BLUPs 
\begin{equation}
\hat{\mu}_{1i}=\left\{\begin{array}{cc} \frac{Ku}{Ku+1}\,\bar{Y}_{\textit{1}i}+\frac{1}{Ku+1}\,\bar{Y_{\textit{1}}}, & \text{ind.}\,\,\ \text{''}i\,\text{''}\,\,\, \text{in\, $G_1$} \vspace*{0.15cm} \\ 
\bar{Y_{\textit{1}}}, & \text{ind.}\,\,\ \text{''}i\,\text{''}\,\,\, \text{in\, $G_2$} \end{array}\right.
\end{equation}
and
\begin{equation}
\hat{\mu}_{2i}=\left\{\begin{array}{cc} \frac{Kv}{Kv+1}\,\bar{Y}_{\textit{2}i}+\frac{1}{Kv+1}\,\bar{Y_{\textit{2}}}, & \text{ind.}\,\,\ \text{''}i\,\text{''}\,\,\, \text{in\, $G_2$} \vspace*{0.15cm} \\ 
\bar{Y_{\textit{2}}}, & \text{ind.}\,\,\ \text{''}i\,\text{''}\,\,\, \text{in\, $G_1$} \end{array}\right.
\end{equation}
for the random effects. Then the BLUE and the BLUP for the contrasts $\alpha_0$ and $\alpha_i$ can be computed as $\hat{\alpha}_0=\hat{\mu}_1 - \hat{\mu}_2$ and $\hat{\alpha}_i=\hat{\mu}_{1i} - \hat{\mu}_{2i}$ and result to formulas \eqref{a0} and \eqref{ai}, respectively. Variance \eqref{var} of the estimator $\hat{\alpha}_0$ can be determined directly.

Using formula \eqref{cov1} we obtain the following joint MSE matrix for both $\hat{\mbox{\boldmath{$\beta $}}}$ and $\hat{\mbox{\boldmath{$\gamma $}}}$:
\begin{equation}\label{cov2}
\mbox{Cov}\,\left( \begin{array}{c} \hat{\mbox{\boldmath{$\beta $}}} \\ \hat{\mbox{\boldmath{$\gamma $}}}-\mbox{\boldmath{$\gamma $}} 
                   \end{array} \right)
=\left(
\begin{array}{cc} \mathbf{C}_{11} & \mathbf{C}_{12} \\ \mathbf{C}_{12}^\top  & \mathbf{C}_{22} \end{array} \right),
\end{equation}
where 
\begin{equation*}
\mathbf{C}_{11}=\left(\begin{array}{cc}\frac{\sigma_1^2(Ku+1)}{Kn_1} & 0 \\ 0 & \frac{\sigma_2^2(Kv+1)}{Kn_2}\end{array}\right),
\end{equation*}
\begin{equation*}
\mathbf{C}_{12}=-\left(\begin{array}{cc}\frac{1}{n_1}\,\sigma_1^2\,u\,\mathbf{1}_{n_1}^\top\otimes e_1^\top & \mathbf{0} \\ \mathbf{0} & \frac{1}{n_2}\,\sigma_2^2\,v\,\mathbf{1}_{n_2}^\top\otimes e_2^\top\end{array}\right)
\end{equation*}
and
\begin{equation*}
\mathbf{C}_{22}= \left(\begin{array}{cc} \mathbf{B}_1 & \mathbf{0} \\ \mathbf{0} & \mathbf{B}_2 \end{array}\right)
\end{equation*}
for
\begin{equation*}
\mathbf{B}_1= \sigma_1^2\left(\frac{Ku^2}{n_1(Ku+1)}\mathbf{1}_{n_1}\mathbf{1}_{n_1}^\top\otimes(e_1e_1^\top)+\mathbf{I}_{n_1}\otimes\textrm{diag}\left(\frac{u}{Ku+1}, v\right)\right)
\end{equation*}
and
\begin{equation*}
\mathbf{B}_2= \sigma_2^2\left(\frac{Kv^2}{n_2(Kv+1)}\mathbf{1}_{n_2}\mathbf{1}_{n_2}^\top\otimes(e_2e_2^\top)+\mathbf{I}_{n_2}\otimes\textrm{diag}\left(u, \frac{v}{Kv+1}\right)\right).
\end{equation*}
The MSE matrix of the prediction $\hat{\mbox{\boldmath{$\theta$}}}$ can be written in terms of joint MSE matrix \eqref{cov2}:
\begin{equation}
\textrm{Cov}\left(\hat{\mbox{\boldmath{$\theta$}}}-\mbox{\boldmath{$\theta$}}\right)=\left(\mathbf{1}_N\otimes\mathbf{I}_2\right)\mathbf{C}_{11}\left(\mathbf{1}_N^\top\otimes\mathbf{I}_2\right)+\left(\mathbf{1}_N\otimes\mathbf{I}_2\right)\mathbf{C}_{12}+\mathbf{C}_{12}^\top\left(\mathbf{1}_N^\top\otimes\mathbf{I}_2\right)+\mathbf{C}_{22}.
\end{equation}
Using this formula we obtain 
\begin{equation*}
\textrm{Cov}\left(\hat{\mbox{\boldmath{$\theta$}}}-\mbox{\boldmath{$\theta$}}\right)=\left(\begin{array}{cc} \mathbf{H}_{11} & \mathbf{H}_{12} \\ 
\mathbf{H}_{12}^\top & \mathbf{H}_{22} \end{array}\right),
\end{equation*}
where
\begin{equation*}
\mathbf{H}_{11}=\mathbf{1}_{n_1}\mathbf{1}_{n_1}^\top\otimes\left(\begin{array}{cc} \frac{\sigma_1^2}{K(Ku+1)n_1} & 0 \\ 0 & \frac{\sigma_2^2(Kv+1)}{Kn_2}\end{array}\right)+\sigma_1^2\,\mathbf{I}_{n_1}\otimes\left(\begin{array}{cc} \frac{u}{Ku+1} & 0 \\ 0 & v \end{array}\right),
\end{equation*}
\begin{equation*}
\mathbf{H}_{12}=\mathbf{1}_{n_1}\mathbf{1}_{n_2}^\top\otimes \left(\begin{array}{cc} \frac{\sigma_1^2}{Kn_1} & 0 \\ 0 & \frac{\sigma_2^2}{Kn_2} \end{array}\right)
\end{equation*}
and
\begin{equation*}
\mathbf{H}_{22}=\mathbf{1}_{n_2}\mathbf{1}_{n_2}^\top\otimes\left(\begin{array}{cc} \frac{\sigma_1^2(Ku+1)}{Kn_1} & 0 \\ 0 & \frac{\sigma_2^2}{K(Kv+1)n_2}\end{array}\right)+\sigma_2^2\,\mathbf{I}_{n_2}\otimes\left(\begin{array}{cc} u & 0 \\ 0 & \frac{v}{Kv+1} \end{array}\right).
\end{equation*}
Then we present the MSE matrix of the predictor $\hat{\mbox{\boldmath{$\alpha$}}}$ in form
\begin{equation*}
\textrm{Cov}\left(\hat{\mbox{\boldmath{$\alpha$}}}-\mbox{\boldmath{$\alpha$}}\right)=\left(\mathbf{I}_N\otimes\mathbf{1}_2^\top\right)   \textrm{Cov}\left(\hat{\mbox{\boldmath{$\theta$}}}-\mbox{\boldmath{$\theta$}}\right)\left(\mathbf{I}_N\otimes\mathbf{1}_2\right)
\end{equation*} 
and receive result \eqref{mse} of Theorem~\ref{t4}.

\bibliographystyle{natbib}
\bibliography{prus5}

\begin{thebibliography}{}

\bibitem[Bailey(2008)]{bai}
Bailey, R.~A. (2008).
\newblock {\em Design of Comparative Experiments}.
\newblock Cambridge University Press.

\bibitem[Christensen(2002)]{chr}
Christensen, R. (2002).
\newblock {\em {Plane Answers to Complex Questions: The Theory of Linear
  Models}}.
\newblock Springer, New York.

\bibitem[Fedorov and Jones(2005)]{fedo}
Fedorov, V. and Jones, B. (2005).
\newblock The design of multicentre trials.
\newblock {\em Statistical Methods in Medical Research}, {\bf 14}, 205--248.

\bibitem[Gladitz and Pilz(1982)]{gla}
Gladitz, J. and Pilz, J. (1982).
\newblock {Construction of optimal designs in random coefficient regression
  models}.
\newblock {\em Mathematische Operationsforschung und Statistik, Series
  Statistics}, {\bf 13}, 371--385.

\bibitem[Henderson(1975)]{hen3}
Henderson, C.~R. (1975).
\newblock {Best linear unbiased estimation and prediction under a selection
  model}.
\newblock {\em Biometrics}, {\bf 31}, 423--477.

\bibitem[Henderson {\em et~al.}(1959)]{hen1}
Henderson, C.~R., Kempthorne, O., Searle, S.~R., and von Krosigk, C.~M. (1959).
\newblock {The estimation of environmental and genetic trends from records
  subject to culling}.
\newblock {\em Biometrics}, {\bf 15}, 192--218.

\bibitem[Kunert {\em et~al.}(2010)]{kun}
Kunert, J., Martin, R.~J., and Eccleston, J. (2010).
\newblock Optimal block designs comparing treatments with a control when the
  errors are correlated.
\newblock {\em Journal of Statistical Planning and Inference}, {\bf 140},
  2719--2738.

\bibitem[Piepho and M\"ohring(2010)]{pie}
Piepho, H.~P. and M\"ohring, J. (2010).
\newblock Generation means analysis using mixed models.
\newblock {\em Crop Science}, {\bf 50}, 1674--1680.

\bibitem[Prus(2015)]{pru3}
Prus, M. (2015).
\newblock {\em Optimal Designs for the Prediction in Hierarchical Random
  Coefficient Regression Models}.
\newblock Ph.D. thesis, Otto-von-Guericke University, Magdeburg.

\bibitem[Prus and Schwabe(2016)]{pru1}
Prus, M. and Schwabe, R. (2016).
\newblock Optimal designs for the prediction of individual parameters in
  hierarchical models.
\newblock {\em Journal of the Royal Statistical Society: Series B}, {\bf 78},
  175--191.

\bibitem[Van~Breukelen and Candel(2018)]{can1}
Van~Breukelen, G. J.~P. and Candel, J. J.~M. (2018).
\newblock Efficient design of cluster randomized trials with
  treatment-dependent costs and treatment-dependent unknown variances.
\newblock {\em Statistics in Medicine}, {\bf 37}, 3027--3046.

\end{thebibliography}

\end{document}